\theoremstyle{plain}
\newtheorem{theorem}{Theorem}
\newtheorem{lemma}{Lemma}
\newtheorem*{theo*}{Theorem}
\theoremstyle{definition}
\newtheorem*{definition*}{Definition}
\newtheorem{remark}{Remark}
\DeclareMathOperator{\Ker}{Ker}
\DeclareMathOperator{\rk}{rk}
\DeclareMathOperator{\ad}{ad}
\DeclareMathOperator{\Frac}{Frac}
\DeclareMathOperator{\Der}{Der}
\begin{document}
\sloppy
\title[Nilpotent Lie algebras of derivations with the center of small corank ]
{Nilpotent Lie algebras of derivations with the center of small corank}
\author
{Ie.Yu.Chapovskyi, L.Z.Mashchenko, A.P.Petravchuk}
\address{Ie.Yu.Chapovskyi: Department of Algebra and Mathematical Logic, Faculty of Mechanics and Mathematics,
Taras Shevchenko National University of Kyiv, 64, Volodymyrska street, 01033  Kyiv, Ukraine}
\email{safemacc@gmail.com}
\address{L.Z.Mashchenko: Kyiv National University of Trade and Economics, 19, Kioto street, Kyiv,  Ukraine}
\email{mashchenkoliudmila@gmail.com}
\address{A.P.Petravchuk:
Department of Algebra and Mathematical Logic, Faculty of Mechanics and Mathematics,
Taras Shevchenko National University of Kyiv, 64, Volodymyrska street, 01033  Kyiv, Ukraine}
\email{ apetrav@gmail.com , apetravchuk@univ.kiev.ua }
\date{\today}
\keywords{derivation, vector field, Lie algebra, nilpotent algebra, integral domain}
\subjclass[2000]{Primary 17B66; Secondary 17B05, 13N15}

%
\begin{abstract}
Let $\mathbb K$ be a field of characteristic zero, $A$ an integral domain over $\mathbb K$ with the field of fractions $R = \Frac(A),$ and $\Der_{\mathbb K}A$ the Lie algebra of all $\mathbb K$-derivations on $A$.
 Let  $W(A):=R\Der_{\mathbb K} A$ and $L$ a nilpotent subalgebra of rank $n$ over $R$ of the Lie algebra  $W(A).$ We prove that  if the center $Z=Z(L)$ is
 of rank $\geq n-2$ over $R$ and  $F=F(L)$ is the field of constants for $L$ in $R,$ then the Lie algebra $FL$
 is contained in a locally nilpotent subalgebra of $ W(A)$ of rank $n$ over $R$ with a natural basis over the field $R.$  It is also  also proved that the  Lie algebra $FL$ can be isomorphically embedded (as an abstract Lie algebra) into the triangular Lie algebra $u_n(F)$ which was studied early by other authors.
 \end{abstract}
\maketitle

\section{Introduction}
Let $\mathbb K$ be a field of characteristic zero, $A$ an integral domain over $\mathbb K,$ and $R = \Frac(A)$ its field of fractions. Recall that a $\mathbb K$-derivation $D$ on $A$ is a $\mathbb K$-linear operator on the vector space $A$ satisfying the Leibniz's rule $D(ab) = D(a)b + aD(b)$
for any $a,b \in A$. The set $\Der_{\mathbb K} A$ of all
$\mathbb K$-derivations on $A$ is a Lie algebra over $\mathbb K$ with the
Lie bracket $[D_1,D_2] = D_1 D_2 - D_2 D_1$.
The Lie algebra $\Der_{\mathbb K} A$ can be isomorphically embedded
into the Lie algebra $\Der_{\mathbb K} R$ (any derivation $D$ on $A$
can be uniquely extended on $R$ by the rule
$D(a/b)=(D(a)b - aD(b))/b^2,$ $ a,b \in A$).
We denote by $W(A)$ the subalgebra $R \Der_{\mathbb K} A$ of the Lie algebra
$\Der_{\mathbb K} R$  (note that $W(A)$ and $\Der_{\mathbb K} R$ are Lie algebras over the field $\mathbb K$ but not over $R$). Nevertheless,
$W(A)$ and $\Der_{\mathbb K} R$ are vector spaces over the field $R$,
so one can define the rank $\rk_R L$ for any subalgebra L of the Lie algebra $W(A)$ by the rule: $\rk_R L = \dim_R RL$. Every subalgebra $L$ of the Lie algebra $W(A)$ determines its field of constants in $R$:
$$F = F(L):= \{r \in R \ | \ D(r) = 0  \ \
\mbox{for all} \  D \in L\}.$$ The product
$FL = \{ \sum \alpha_i D_i \ | \ \alpha_i \in F,\; D_i \in L \}$ is a Lie algebra
over the field $F$, this Lie algebra often has  simpler structure than $L$
itself (note that such an extension of the ground field preserves the main
properties of $L$ from the viewpoint of Lie theory).

We study nilpotent subalgebras $L \subseteq W(A)$ of rank $n \ge 3$ over $R$
with the center $Z = Z(L)$ of rank $\geq n-2$ over $R$, i.e. with the center of corank $\leq 2$ over $R$. We prove that $FL$ is contained in a locally nilpotent subalgebra of $W(A)$ with  a natural basis  over $R$ similar to the standard basis of the triangular Lie algebra $U_n(F)$ (Theorem \ref{classification}). As a consequence, we get  an isomorphic  embedding (as Lie algebras) of the Lie algebra $FL$ over $F$ into the triangular Lie algebra $u_n(F)$ over $F$ (Theorem \ref{embed}). These results generalize main results of the papers \cite{Sysak} and \cite{Petr}. Note that the problem of classifying finite dimensional Lie algebras from Theorem \ref{classification} up to isomorphism is wild (i.e., it contains the hopeless problem of classifying pairs of square matrices up to similarity, see \cite{BP}). Triangular Lie algebras were studied in \cite{Bav1}
and \cite{Bav2}, they are locally nilpotent but not nilpotent.

We use standard notations. The ground field $\mathbb K$ is arbitrary of characteristic zero. If $F$ is a subfield of a field $R$ and
$r_1,\dots,r_k \in R,$ then $F \langle r_1,\dots,r_k \rangle$ is the set of
all linear combinations of $r_i$ with coefficients in $F$, it is a subspace in the $F$-space $R$, for an infinite set $\{r_1, \ldots , r_k, \ldots \}$ we use the notation $F\langle \{r_i\}_{i=1}^{\infty}\rangle .$  The triangular subalgebra $u_n(\mathbb K)$ of the Lie
algebra $W_n(\mathbb K):=\Der_{\mathbb K} \mathbb K[x_1,\dots,x_n]$ consists
of all the derivations on $\mathbb K[x_1,\dots,x_n]$ of the form
$$
D = f_1(x_2,\dots,x_n) \frac{\partial}{\partial x_1} + \dots +
f_{n-1}(x_n) \frac{\partial}{\partial x_{n-1}} +
f_n \frac{\partial}{\partial x_1},
$$
where $f_i \in \mathbb K[x_{i+1},\dots,x_n]$, $f_n \in \mathbb K$.
If $D \in W(A),$ then $\Ker D$ denotes the field of constants for $D$ in $R,$
i.e., $\Ker D = \{r \in R \ | \ D(r) = 0\}$.

\section{Main properties of nilpotent subalgebras of W(A)}
We often use the next relations for derivations which are well known (see, for example  \cite{Nowicki}).

Let $D_1, D_2 \in W(A)$ and $a,b \in R$. Then

	1)  $[aD_1,bD_2] = ab[D_1,D_2] + aD_1(b)D_2 - bD_2(a)D_1;$

	2)  if $a,b \in \Ker D_1 \cap \Ker D_2$, then
		$[aD_1,bD_2] = ab[D_1,D_2].$

The next two lemmas contain some results about derivations and Lie algebras of derivations.
\begin{lemma}(\cite{MP1}, Lemma 2)
	Let $L$ be a subalgebra of the Lie algebra $\Der_{\mathbb K} R$
	and $F$ the field of constants for $L$ in $R$.
	Then $FL$ is a Lie algebra over $F,$ and if $L$ is abelian,
	nilpotent or solvable, then so is $FL$, respectively.
\end{lemma}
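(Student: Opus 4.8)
The plan is to isolate the single computational identity that governs the passage from $L$ to $FL$, and then propagate it through the lower central and derived series by induction. First I would confirm that $F$ is genuinely a subfield of $R$. Writing $F = \bigcap_{D \in L} \Ker D$, each $\Ker D$ is the field of constants of a single derivation, hence a subfield of $R$: it is closed under sums and products, and under inverses since $D(r^{-1}) = -D(r)r^{-2} = 0$ whenever $D(r) = 0$. An intersection of subfields is again a subfield, so $F$ is a subfield of $R$, and therefore $FL$ is an $F$-subspace of $\Der_{\mathbb K} R$.

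The key step is the multiplicative behaviour of the bracket on $FL$. For $\alpha, \beta \in F$ and $D_1, D_2 \in L$ we have $\alpha, \beta \in \Ker D_1 \cap \Ker D_2$ by the definition of $F$, so relation 2) above applies and yields $[\alpha D_1, \beta D_2] = \alpha\beta\,[D_1, D_2]$. Since $[D_1, D_2] \in L$ and $\alpha\beta \in F$, the right-hand side lies in $FL$; extending bilinearly shows that $FL$ is closed under the bracket, hence is a Lie algebra over $F$. The abelian case is then immediate: if $[D_1, D_2] = 0$ for all $D_1, D_2 \in L$, then $[\alpha D_1, \beta D_2] = \alpha\beta\,[D_1, D_2] = 0$, so $FL$ is abelian.

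For nilpotency and solvability I would establish the identities $(FL)^{(k)} = F L^{(k)}$ and $(FL)^{[k]} = F L^{[k]}$ by induction on $k$, where $L^{(k)}$ and $L^{[k]}$ denote the lower central and derived series of $L$. The base cases are $(FL)^{(1)} = FL = F L^{(1)}$ and $(FL)^{[0]} = FL = F L^{[0]}$. For the inductive step, a typical generator of $(FL)^{(k+1)} = [FL, (FL)^{(k)}]$ has the form $[\alpha D, \beta E]$ with $D \in L$, $E \in L^{(k)} \subseteq L$, and $\alpha, \beta \in F$; because both $D$ and $E$ lie in $L$ they annihilate $\alpha$ and $\beta$, so the identity of the previous paragraph gives $[\alpha D, \beta E] = \alpha\beta\,[D, E] \in F L^{(k+1)}$. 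The reverse inclusion $F L^{(k+1)} \subseteq (FL)^{(k+1)}$ follows from $\alpha\,[D, E] = [\alpha D, E]$, which again holds since $E \in L$ kills $\alpha$. The derived-series computation is identical with $L^{[k]}$ in place of $L^{(k)}$. Consequently $L^{(m)} = 0$ forces $(FL)^{(m)} = 0$, and $L^{[m]} = 0$ forces $(FL)^{[m]} = 0$, so nilpotency and solvability transfer from $L$ to $FL$ with the same class and derived length.

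I do not anticipate a genuine obstacle; the single point requiring care is checking at each inductive step that the derivations entering the commutators still belong to $L$ — not merely to $FL$ — since it is membership in $L$ that guarantees they annihilate the field of constants $F$ and thereby licenses the multiplicative identity $[\alpha D, \beta E] = \alpha\beta\,[D, E]$. This is automatic, because $L^{(k)} \subseteq L$ and $L^{[k]} \subseteq L$ for every $k$.
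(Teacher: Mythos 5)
Your proof is correct; the paper itself does not prove this lemma (it is quoted from \cite{MP1}), but your argument is the standard one, resting exactly on relation 2) of Section 2 ($[\alpha D_1,\beta D_2]=\alpha\beta[D_1,D_2]$ for $\alpha,\beta\in F$) and propagating it through the lower central and derived series, which is how the cited source proceeds. The one point needing care --- that the derivations appearing at each inductive step lie in $L$ itself, not merely in $FL$, so that they annihilate $F$ --- is the point you explicitly address, so there is no gap.
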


\begin{lemma} (\cite{MP1}, Proposition 1)
	Let $L$ be a nilpotent subalgebra of the Lie algebra $W(A)$ with
	$\rk_R L < \infty $ and $F = F(L)$ the field of constants for $L$ in $R$.
	Then:
	\begin{enumerate}
		\item $FL$ is finite dimensional over $F;$
		\item if $\rk_R L = 1$, then $L$ is abelian and $\dim_F FL=1;$
		\item if $\rk_R L = 2$, then $FL$ is either abelian with
		$\dim_F FL = 2$ or $FL$ is of the form
		$$
		FL = F \langle  D_2, D_1, aD_1,\dots,\frac{a^k}{k!}D_1 \rangle ,
		$$
		for some $D_1,D_2 \in FL$ and $a \in R$ such that
		$[D_1,D_2]=0, \ \ D_2(a) = 1,\; D_1(a) = 0$.
	\end{enumerate}
\end{lemma}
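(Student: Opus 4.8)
The plan is to pass immediately from $L$ to $FL$: by Lemma 1 the algebra $FL$ is again nilpotent over $F$, it has the same rank $n=\rk_R L$ over $R$, and since $R$-independence implies $F$-independence we always have $\dim_F FL\ge\rk_R FL=n$, so the $F$-dimension can only exceed the rank. The computational engine throughout is the bracket identity (1): if $[D_1,D_2]=0$, $D_2(a)=1$ and $D_1(a)=0$, a direct substitution gives $[D_2,a^kD_1]=ka^{k-1}D_1$, so $\ad D_2$ shifts the chain $D_1,aD_1,\frac{a^2}{2!}D_1,\dots$ downward by one step. Because $FL$ is nilpotent of some finite class $c$, the identity $(\ad D_2)^j\bigl(\frac{a^j}{j!}D_1\bigr)=D_1\ne 0$ forces every such chain to have length at most $c$; this is exactly what prevents the powers of $a$ from running away and is the source of all finiteness.

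For Part 1 I would induct on $n$. Since $FL$ is nilpotent and nonzero its center is nonzero, so one can select $0\ne D_1$ in the center and reduce to an algebra of smaller rank; the elements lost lie along the central direction and have the form $gD_1$. On the $F$-space $G:=\{g\in R : gD_1\in FL\}$ the operator induced by $\ad$ of a transversal derivation is $F$-linear and nilpotent, and by the chain-length bound above each of its Jordan blocks has size at most $c$. Since the number of such blocks is controlled by the rank, one concludes $\dim_F FL<\infty$. The main obstacle here is bookkeeping the field of constants: one must verify that $F$ does not shrink under the reduction and that the decomposition into finitely many bounded chains is genuinely forced rather than merely plausible.

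For Part 2 we have $FL\subseteq RD$ for a single $D$, and $F=\Ker D\cap R$ because constancy for all of $L$ is the same as constancy for $D$. If $\alpha D,\beta D\in FL$ are $F$-independent then $\beta/\alpha\notin F$, hence $D(\beta/\alpha)\ne 0$ and $[\alpha D,\beta D]=\alpha^2D(\beta/\alpha)D\ne 0$; so $\dim_F FL\ge 2$ already forces $FL$ to be non-abelian. I would then rule this out under nilpotency: writing $E=\alpha D$ and $u=\beta/\alpha$, one gets $[E,uE]=E(u)E$, and on a suitable $\ad$-invariant slice $\ad(uE)$ acts by the nonzero scalar $-E(u)$ (the model being $[\partial_x,x\partial_x]=\partial_x$), contradicting ad-nilpotency. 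Hence $\dim_F FL=1$ and $FL$ is abelian.

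For Part 3 write $RL=RD_1\oplus RD_2$. If $FL$ is abelian, then as in Part 2 any two $F$-independent elements must commute, which pins down $\dim_F FL=2$. Otherwise the center of $FL$ has rank $1$; choosing a central $D_1$ and a transversal $D_2$ gives $[D_1,D_2]=0$, and $\ad D_2$ acts on $G=\{g : gD_1\in FL\}$ by $g\mapsto D_2(g)$. By Part 1 the space $G$ is finite-dimensional and $D_2|_G$ is nilpotent; the crux is to produce $a\in R$ with $D_2(a)=1$ and $D_1(a)=0$ realizing $D_2|_G$ as a single Jordan chain, whence $G=F\langle 1,a,\frac{a^2}{2!},\dots,\frac{a^k}{k!}\rangle$ and the asserted basis follows. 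The step I expect to be most delicate is controlling $\Ker D_2\cap G$, i.e.\ excluding extra constants that would split $D_2|_G$ into several Jordan blocks; this is precisely where the hypothesis $\rk_R L=2$ and the structure of the constant field $F$ must be brought to bear.
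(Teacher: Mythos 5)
The paper does not actually prove this lemma: it is quoted verbatim from \cite{MP1} (Proposition 1), so there is no in-paper argument to compare yours against. Judged on its own, your sketch has the right skeleton (central element, $\ad D_2$ acting on coefficients as $D_2$ itself, the identity $[D_2,\frac{a^j}{j!}D_1]=\frac{a^{j-1}}{(j-1)!}D_1$, normalization of Jordan chains to powers $\frac{a^k}{k!}$), but each part stops short of, or misfires at, its crux.

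In Part 2 the mechanism you propose is not valid as stated. You claim that $\ad(uE)$ ``acts by the nonzero scalar $-E(u)$'' on a suitable slice; but $E(u)$ is an element of $R$, not of the ground field, and iterating $\ad(uE)$ on $E$ does not multiply by $-E(u)$ each time: $(\ad uE)^2(E)=\bigl(E(u)^2-uE^2(u)\bigr)E$, which can vanish or not independently of $E(u)\ne0$, so no eigenvalue contradiction is available. The correct (and much shorter) route is through the nonzero center of the nilpotent algebra: pick $0\ne D_1\in Z(L)$; for any $aD_1\in L$, $0=[D_1,aD_1]=D_1(a)D_1$ forces $D_1(a)=0$, whence $[aD_1,bD_1]=(aD_1(b)-bD_1(a))D_1=0$ and $L$ is abelian; then $a/b\in\Ker D_1=F$ for any two coefficients, giving $\dim_F FL=1$. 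In Part 3 you correctly identify the crux --- that $\Ker(\ad D_2)$ on $G=\{g: gD_1\in FL\}$ must be exactly $F$, so that there is a single Jordan block --- but you leave it as an expectation rather than proving it; the missing line is: if $D_2(g)=0$ then also $D_1(g)=0$ by centrality of $D_1$, and since $D_1,D_2$ span $L$ over $R$, $g$ is annihilated by all of $L$, i.e. $g\in F$. Likewise the passage from ``single chain of bounded length'' to the explicit basis $D_1,aD_1,\dots,\frac{a^k}{k!}D_1$ needs the adjustment $c-\frac{a^2}{2}\in\Ker D_1\cap\Ker D_2=F$ at each stage, which you assert (``the asserted basis follows'') without carrying out. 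In Part 1 the statement ``the number of such blocks is controlled by the rank'' is precisely the rank-$n$ analogue of the Part 3 crux and is the entire content of the finiteness claim; as written the induction does not close, and you concede as much. So the proposal is a correct outline with the decisive steps either missing or, in Part 2, replaced by an argument that does not work.
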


\begin{lemma}\label{central}
	Let $L$ be a nilpotent subalgebra of the Lie algebra $W(A)$	of rank
	$n $ over $R$ with the center $Z=Z(L)$ of rank $k$ over $R.$
	Then $I:= RZ\cap L$ is an abelian ideal of $L$  with $\rk _{R}I=k.$
\end{lemma}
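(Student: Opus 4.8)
The plan is to read everything off from a single application of the product rule~1) combined with the defining property of the center; somewhat surprisingly, the nilpotency of $L$ will not be needed, only the centrality of $Z$. First I would take an arbitrary $x\in I$ and, using $x\in RZ$, write it as a finite sum $x=\sum_i r_i z_i$ with $r_i\in R$ and $z_i\in Z$. For any $D\in L$, applying~1) to each summand $[r_i z_i,D]$ and using $[z_i,D]=0$ (since $z_i$ is central and $D\in L$) collapses the bracket to
$$
[x,D]=-\sum_i D(r_i)\,z_i ,
$$
which plainly lies in $RZ$. As $[x,D]\in L$ as well, I get $[x,D]\in RZ\cap L=I$; hence $[I,L]\subseteq I$ and $I$ is an ideal of $L$.

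The heart of the matter is abelianness, and the key idea is to use centrality from \emph{both} factors. Given $x,y\in I$, I would write $y=\sum_j s_j w_j$ with $s_j\in R$ and $w_j\in Z$, and feed $D=y$ into the displayed formula to obtain $[x,y]=-\sum_i y(r_i)\,z_i$. Expanding $y(r_i)=\sum_j s_j\,w_j(r_i)$ and regrouping the double sum yields
$$
[x,y]=-\sum_j s_j\Big(\sum_i w_j(r_i)\,z_i\Big).
$$
Now the inner sum is precisely $[w_j,x]$ (again by~1) and $[w_j,z_i]=0$), and it vanishes because $w_j\in Z$ commutes with $x\in L$. Thus every inner sum is zero, so $[x,y]=0$ and $I$ is abelian.

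The rank statement is then pure linear algebra over $R$: from $Z\subseteq L$ and $Z\subseteq RZ$ we have $Z\subseteq I$, while $I\subseteq RZ$ by construction, so the $R$-spans obey $RZ\subseteq RI\subseteq RZ$, forcing $RI=RZ$ and hence $\rk_R I=\rk_R Z=k$. I expect the only genuine obstacle to be the abelian step: at first sight one might fear it needs the full nilpotency of $L$ via an Engel-type induction, whereas in fact it reduces to the single observation $\sum_i w_j(r_i)\,z_i=[w_j,x]=0$ coming from centrality. The one place to stay alert is that $x$ and $y$ must be expanded over $Z$ with coefficients in $R$ — not merely in the field of constants $F$ — so that those coefficients are genuinely moved by the derivations of $L$; this is exactly what makes $I$ potentially much larger than $Z$ itself and the computation nontrivial.
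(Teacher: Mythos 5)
Your proof is correct, and it reaches the same conclusion by a slightly different, basis-free route. The paper first invokes Lemma~4 of \cite{MP1} for the ideal property, whereas you derive it directly from relation~1) and centrality; both are fine, and your version has the advantage of being self-contained. For abelianness the paper picks a basis $D_1,\dots,D_k$ of $Z$ over $R$ (which is also an $R$-basis of $I$), writes $D=\sum a_iD_i$, and uses the \emph{linear independence} of the $D_i$ over $R$ to extract the pointwise vanishing $D_j(a_i)=0$ before computing $[D,\overline{D}]=0$. You instead keep an arbitrary (possibly redundant) expansion $x=\sum_i r_i z_i$, regroup the double sum, and recognize each inner sum $\sum_i w_j(r_i)z_i$ as the bracket $[w_j,x]$, which vanishes because $w_j$ is central --- so you never need linear independence or the intermediate fact that the individual coefficients are annihilated. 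Your argument is marginally more economical and makes explicit that nilpotency of $L$ plays no role here; the paper's version buys the slightly stronger byproduct $D_j(a_i)=0$ for a chosen basis, which is in the spirit of how later lemmas manipulate coefficients. The rank statement is handled identically ($Z\subseteq I\subseteq RZ$ forces $RI=RZ$). No gaps.
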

\begin{proof}
	By  Lemma 4 from \cite{MP1}, $I$ is an ideal of the Lie algebra $L.$
	Let us show that $I$ is abelian.
	Let us choose an arbitrary basis $D_1,\dots,D_{k}$ of the center $Z$
	over $R$ (i.e., a maximal by inclusion linearly independent over $R$
	subset of $Z$).
	One can easy to see that $D_1,\dots,D_{k}$ is a basis of the ideal $I$ as well, so  we can write  for each element $D \in I$
	$$
	D = a_1 D_1 + \dots + a_{k} D_{k}
	$$
	for some $a_1,\dots,a_{k} \in R$.
	Since $D_j \in Z,\; j=1,\dots,k$ it holds
	\begin{equation} \label{centrality}
	[D_j,D] = [D_j,\sum_{i=1}^{k} a_i D_i] =
	\sum_{i=1}^{k} D_j(a_i)D_i = 0
	\end{equation}
	for $j=1,\dots,k$. The derivations  $D_1,\dots,D_n$ are linearly independent over the field $R,$ hence we obtain  from (\ref{centrality}) that
	$D_j(a_i) = 0, \; i,j = 1,\dots,k$.
	Therefore  we have for each element
	$\overline{D} = b_1 D_1 + \dots b_{k} D_{k}$ of the ideal $I$
	the next equalities:
	$$
	[D,\overline{D}] = [\sum_{i=1}^{k} a_i D_i, \sum_{j=1}^{k} b_j D_j] =
	\sum_{i,j=1}^{k} a_i b_j [D_i,D_j] = 0,
	$$
	since $D_i(b_j) = D_j(a_i)=0$ as mentioned above.
	The latter means  that $I$ is an abelian ideal. Besides, obviously
	$\rk_R I=k.$
\end{proof}

\begin{lemma}\label{elements}
	Let $L$ be a nilpotent subalgebra of the Lie algebra $W(A)$, $Z = Z(L)$ the center of L,  $I := RZ \cap L$ and $F$ the field of constants for $L$ in $R.$ If for some $D \in L$ it holds  $[D, FI]\subseteq FI, [D,FI] \ne 0,$ then
	there exist a basis $D_1,\dots,D_m$ of the ideal  $FI$ of the Lie algebra $FL$ over $R$
	and $a \in R$ such that $D(a) = 1,\; D_i(a) = 0,\; i=1,\dots,m.$ Besides, 	 each element $\overline{D} \in FI$ is of the form
	$\overline{D} = f_1(a)D_1 + \dots + f_m(a)D_m$ for some polynomials
	$f_i \in F_1[t]$, where $F_1$ is the field of constants for the subalgebra
	$L_1 = FI+FD$ in $R$.
\end{lemma}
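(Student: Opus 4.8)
The plan is to use two structural facts about $FI$. First, $FI$ is an \emph{abelian} ideal of $FL$: it is an ideal by Lemma~\ref{central}, and it is abelian because $I$ is abelian and, for $f,g\in F$ and $D',D''\in I$, the relation $[fD',gD'']=fg[D',D'']+fD'(g)D''-gD''(f)D'$ reduces to $0$ since $F$ consists of constants. Second, $FL$ is nilpotent by the first lemma, so $\ad D$ acts on $FI$ as a nilpotent operator, and this action is $F$-linear because $[D,fE]=f[D,E]$ for $f\in F$ (again $D(f)=0$). Put $m=\rk_R FI$; since $Z\subseteq FI$ and $RFI=RZ$ we have $m=\rk_R Z$, and every element of $Z$ commutes with $D$, so $Z\subseteq C_{FI}(D):=\{E\in FI:[D,E]=0\}$ and $\rk_R C_{FI}(D)=m$. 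I therefore fix an $R$-basis $D_1,\dots,D_m$ of $FI$ chosen \emph{inside} $C_{FI}(D)$, so that $[D,D_i]=0$.

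The key point is that, relative to this basis, the coefficients of any element of $FI$ are automatically constants of $FI$. Indeed, if $E=\sum_i\rho_iD_i$ with $\rho_i\in R$, then bracketing with an arbitrary $G\in FI$ and using $[G,D_i]=0$ and $[G,E]=0$ gives $\sum_iG(\rho_i)D_i=0$, hence $G(\rho_i)=0$; thus $\rho_i\in F(FI)=:K_1$ for all $i$. Moreover $K_1$ is $D$-invariant: for $u\in K_1$ and $G\in FI$ one has $G(D(u))=D(G(u))-[D,G](u)=0$, where $G(u)=0$ and $[D,G](u)=0$ because $[D,G]\in FI$ by hypothesis. Since $[D,D_i]=0$, applying $D$ to $E=\sum\rho_iD_i$ yields $[D,E]=\sum_iD(\rho_i)D_i$, so $\ad D$ acts on coefficients exactly as $D$ acts on $K_1$; in particular $C_{FI}(D)=F_1\langle D_1,\dots,D_m\rangle$, where $F_1=K_1\cap\Ker D=F(L_1)$.

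To construct $a$ I use the second layer of the nilpotent operator $\ad D$. As $[D,FI]\neq0$, the centralizer $C_{FI}(D)=\Ker(\ad D|_{FI})$ is proper, so $\Ker(\ad D)^2\supsetneq\Ker(\ad D)$ and I may pick $E=\sum\rho_iD_i$ with $[D,E]\neq0$ and $[D,[D,E]]=0$. The second condition gives $D^2(\rho_i)=0$, so each $D(\rho_i)$ lies in $\Ker D$; and $D(\rho_i)\in K_1$ by the $D$-invariance of $K_1$, whence $D(\rho_i)\in K_1\cap\Ker D=F_1$. Choosing $i_0$ with $\lambda:=D(\rho_{i_0})\neq0$ and setting $a:=\rho_{i_0}/\lambda$, I get $a\in K_1$ and $D(a)=D(\rho_{i_0})/\lambda=1$ because $\lambda\in F_1\subseteq\Ker D$; and $D_i(a)=0$ for all $i$ holds automatically since $a\in K_1=F(FI)$ and $D_i\in FI$. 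This is the step I expect to be the main obstacle: obtaining $D(a)=1$ alone is easy (a $D$-antiderivative of $1$ is read off from a Jordan chain of $\ad D$), but the basis must kill $a$, which forces $a\in F(FI)$; it is precisely the centralizer-adapted basis together with the abelianness of $FI$ that places the relevant antiderivative inside $F(FI)$.

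It remains to obtain the polynomial form. Each coefficient $\rho\in K_1$ of an element of $FI$ is annihilated by some power of $D$ (because $\ad D$ is nilpotent on $FI$ and acts on coefficients as $D$), so I argue by induction on the least $N$ with $D^N(\rho)=0$ that $\rho\in F_1[a]$: for $N=1$ one has $\rho\in K_1\cap\Ker D=F_1$; for $N\ge2$ the inductive hypothesis gives $D(\rho)=q(a)$ with $q\in F_1[t]$, and taking $Q\in F_1[t]$ with $Q'=q$ (possible in characteristic zero) yields $D(\rho-Q(a))=q(a)-q(a)D(a)=0$, so $\rho-Q(a)\in F_1$ and $\rho\in F_1[a]$. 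Applying this to every coefficient of $\overline D=\sum_i\rho_iD_i\in FI$ gives $\overline D=f_1(a)D_1+\dots+f_m(a)D_m$ with $f_i\in F_1[t]$, as required.
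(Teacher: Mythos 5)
Your proof is correct, and it reaches the same three conclusions (the adapted basis, the slice $a$, the polynomial form) by a more self-contained route than the paper. The paper's proof packages the argument through the theory of locally nilpotent derivations: it collects the coefficients $r_{ij}$ of an $F$-basis of $FI$ into a finitely generated ring $B=F[r_{ij}]$, observes that $D$ is locally nilpotent on $B$, produces a preslice $p$ and slice $a=p/D(p)$, and then invokes Principle~11 of Freudenburg to write $B[c^{-1}]=B_0[a]$ with $B_0=\Ker D$, from which the polynomial form follows; the condition $D_i(a)=0$ is left implicit there (it holds because $B$ lies in the constants of $FI$, by the computation in Lemma~\ref{central}). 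You instead work coefficientwise throughout: you show directly that the coordinates of any element of $FI$ relative to a basis taken in $C_{FI}(D)$ lie in the field of constants $K_1$ of $FI$, that $K_1$ is $D$-invariant, that a slice $a$ can be extracted from the second layer of the Jordan filtration of $\ad D$ (normalizing by a nonzero $\lambda\in F_1$, which is legitimate since $\lambda\in\Ker D$), and that the polynomial form follows by induction on the nilpotency degree using antiderivatives in $F_1[t]$ --- in effect reproving the slice theorem in the special case needed. What your version buys is transparency on exactly the point you flag: the fact that $a$ is killed by all $D_i$ is forced by $a\in K_1$, which your centralizer-adapted basis makes explicit, whereas the paper obtains it only implicitly through the containment $B\subseteq\bigcap_i\Ker D_i$. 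What the paper's version buys is brevity and a uniform mechanism (localization at $c=D(p)$ plus the standard $B_0[a]$ decomposition) that is reused almost verbatim in Lemma~\ref{twovariables} for two commuting derivations; your inductive argument would need to be rerun in two variables there. Both arguments are sound.
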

\begin{proof}
	By Lemma \ref{central}, the intersection $I=RZ\cap L$ is an abelian ideal of the Lie algebra $L$ and
therefore $FI$ is an abelian ideal of the Lie algebra $FL$. Choose a basis $D_1, \ldots , D_m$ of $FI$ over the field $R$ in such a way that $D_1, \ldots , D_m\in Z.$  Then $FZ$ is the center of the Lie algebra $FL.$
Now take any basis $T_1, \ldots , T_s$ of the $F$-space $FI$ (note that the Lie algebra $FL$ is finite dimensional over the field $F$ by \cite{MP1}). Every basis element $T_i$ can be written in the form $T_i=\sum _{j=1}^{m}r_{ij}D_j, i=1, \ldots s$ for some $r_{ij} \in R.$ Denote by $B$ the subring $B=F[r_{ij}, i=1, \ldots s, j=1, \ldots ,m]$ of the field $R$ generated by $F$ and the elements $r_{ij}.$ Since the linear operator $\ad D$ is nilpotent on the $F$-space $FI$ the derivation $D$ is locally nilpotent on the ring $B.$ Indeed,
$$ [D, T_i]=[D, \sum _{j=1}^{m}r_{ij}D_j]=\sum _{j=1}^{m}D(r_{ij})D_j$$
and therefore
$$(\ad D)^{k_i}(T_i)=\sum _{j=1}^{m}D^{k_i}(r_{ij})D_j=0$$
for some natural $k_i, i=1, \ldots ,s.$ Denoting ${\overline k}=\max _{1\leq t\leq s}k_{t}$ we get $D^{\overline k}(r_{ij})=0$ and therefore $D$ is locally nilpotent on $B.$ One can easily show that there exists an element $p\in B$ (a preslice) such  that $D(p)\in \Ker D, D(p)\not =0.$ Then denoting $a:=p/D(p)$ we have $D(a)=1$ (such an element $a$ is called a slice for $D$).
The ring $B$ is contained in the localization $B[c^{-1}],$ where $c:=D(p)$ and the derivation $D$ is locally nilpotent on $B[c^{-1}].$ Note that $B[c^{-1}]\subseteq F_1,$ where $F_1$ is the field of constants for $L_1=FI+FD$ in $R.$ Besides,  by Principle 11 from \cite{Fr}  it holds $B[c^{-1}]=B_{0}[a],$ where $B_0$ is the kernel of $D$ in $B[c^{-1}].$ This completes the proof because $B\subseteq B[c^{-1}]$ and every element $\overline D$ of $FI$ is of the form $\overline{D}=b_1D_1+\ldots b_mD_{m}, b_i\in B.$

\end{proof}

\begin{lemma}\label{factor}
	Let $L$ be a nilpotent subalgebra of the Lie algebra
	$W(A)$, $Z = Z(L)$ the center of $L,$
	$F$ the field of constants of $L$ in $R$
	and $I = RZ \cap L$. Let $\rk _{R}Z=n-2.$
	Then the following statements for the Lie algebra $FL/FI$ hold:
	\begin{enumerate}
	\item if $FL / FI$ is  abelian, then
	$\dim_F FL / FI = 2$;
	\item if $FL / FI$ is nonabelian, then there exist
	 elements $D_{n-1}, D_n \in FL, \; b \in R$ such that
	$$
	FL / FI = F \langle  D_{n-1} + FI,\, bD_{n-1} + FI,\,		\dots,\, \frac{b^k}{k!} D_{n-1} + FI,\, D_n + FI \rangle
	$$
	with $ k\geq 1, D_n(b) = 1, \; D_{n-1}(b) = 0, \; D(b) = 0$
	for all $D \in FI$.
	\end{enumerate}
\end{lemma}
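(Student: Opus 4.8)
The plan is to reduce the study of the factor algebra $\bar L := FL/FI$ to the classification of nilpotent Lie algebras of derivations of rank at most two (\cite{MP1}, Proposition~1). First I record the usable data: by Lemma~\ref{central} the ideal $I=RZ\cap L$ is abelian with $\rk_R I=n-2$, so $FI$ is an abelian ideal of $FL$ and $RI=RZ$; since $\rk_R L=n$ this gives $\dim_R(RL/RZ)=2$. As noted in the proof of Lemma~\ref{elements}, $FZ$ is the center of $FL$ and $FZ\subseteq FI$. Because $FL$ is a finite-dimensional nilpotent Lie algebra over $F$ (\cite{MP1}), $\bar L$ is finite-dimensional nilpotent, and the dichotomy in the statement is exactly whether $[FL,FL]\subseteq FI$ or not.

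For the abelian case I would first note $\dim_F\bar L\geq 2$: if $FL=FI+FD$ for a single $D$, then $RL=RZ+RD$ would have $R$-dimension at most $n-1$, contradicting $\rk_R L=n$. For the reverse bound I would push $\bar L$ into the two-dimensional $R$-space $RL/RZ$ via the $F$-linear map $\varphi\colon FL\to RL/RZ$ induced by inclusion, whose kernel contains $FI$. The content to be extracted is that, under the abelian hypothesis, the field of constants $F$ absorbs all $R$-proportionalities among images, so that $\dim_F\varphi(FL)=\dim_R(RL/RZ)=2$ and $\ker\varphi=FI$; this is precisely the abelian subcase of (\cite{MP1}, Proposition~1) transported to the quotient, and it yields $\dim_F\bar L=2$.

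In the nonabelian case the target is to exhibit $\bar L$ as the rank-two nonabelian algebra of (\cite{MP1}, Proposition~1). Since $\bar L$ is a nonzero nilpotent Lie algebra its center is nonzero, so I would choose $D_{n-1}\in FL$ with $\overline{D_{n-1}}\neq 0$ central in $\bar L$, i.e. $[D_{n-1},FL]\subseteq FI$, and a $D_n\in FL$ whose induced operator $\ad\overline{D_n}$ is nonzero on $\bar L$. The key step is then to manufacture $b$: using the local nilpotency mechanism of Lemma~\ref{elements} (preslice, then slice via Principle~11 of~\cite{Fr}) applied to the action of $D_n$, I would produce $b\in R$ with $D_n(b)=1$ lying in the common kernel of $D_{n-1}$ and of all derivations in $FI$, so that $D_{n-1}(b)=0$ and $D(b)=0$ for every $D\in FI$. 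With such a $b$ the commutator relation $[aD_1,bD_2]=ab[D_1,D_2]+aD_1(b)D_2-bD_2(a)D_1$ gives, modulo $FI$,
$$
\Bigl[D_n,\tfrac{b^{j}}{j!}D_{n-1}\Bigr]\equiv \tfrac{b^{j-1}}{(j-1)!}D_{n-1},\qquad \Bigl[D_{n-1},\tfrac{b^{j}}{j!}D_{n-1}\Bigr]\equiv 0,
$$
so the cosets $\tfrac{b^{j}}{j!}\,\overline{D_{n-1}}$ for $0\le j\le k$ together with $\overline{D_n}$ form an $\ad$-string; that this family is an $F$-basis of $\bar L$ with finite top degree $k\ge 1$ follows from $\dim_R(RL/RZ)=2$ and the finite dimensionality of $FL$ over $F$, which is the required presentation.

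The main obstacle is the passage from the $R$-rank datum $\dim_R(RL/RZ)=2$ to the exact $F$-dimensional chain structure, i.e. proving that no extra $F$-independent cosets survive and that powers of $b$ are the only coefficients that can occur; this is the same phenomenon that makes the field of constants indispensable in (\cite{MP1}, Proposition~1), and it is what forces the careful construction of $b$ as a common constant of $D_{n-1}$ and of $FI$ with $D_n(b)=1$. A secondary technical point is verifying that $\overline{D_{n-1}}$ may be taken central and $R$-independent from $\overline{D_n}$ simultaneously, which I would arrange by first fixing a basis of $RL/RZ$ and then adjusting representatives inside their $FI$-cosets.
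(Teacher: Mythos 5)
Your outline has the right shape, but it stops short of proving precisely the two assertions that are the actual content of the lemma, and you flag them yourself (``the main obstacle is the passage from the $R$-rank datum \dots to the exact $F$-dimensional chain structure'') without resolving them. In the abelian case, the upper bound $\dim_F FL/FI\le 2$ is exactly the claim that for $S=\sum_{i=1}^n r_iD_i\in FL$ the coefficients $r_{n-1},r_n$ lie in $F$; you assert this is ``the abelian subcase of Proposition~1 of \cite{MP1} transported to the quotient,'' but that proposition concerns rank-two subalgebras of $W(A)$, and $FL/FI$ is not a Lie algebra of derivations, so it does not apply directly. The paper instead proves it by a short direct computation: $[D_i,S]=0$ for the central $D_i$ ($i\le n-2$) gives $D_i(r_j)=0$, and then $[D_i,S]\in FI$ for $i=n-1,n$ forces $D_i(r_j)=0$ for $i,j\in\{n-1,n\}$, whence $r_{n-1},r_n\in F$. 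You should supply this (or an equivalent) computation rather than cite a result that does not cover quotients.

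In the nonabelian case the gap is more serious. You produce $b$ with $D_n(b)=1$ by a slice construction and then merely assert that the cosets $\frac{b^j}{j!}D_{n-1}+FI$ together with $D_n+FI$ span $FL/FI$ over $F$, claiming this ``follows from $\dim_R(RL/RZ)=2$ and finite dimensionality.'' It does not: a priori the ideal $FI_1/FI$ (where $I_1=R(I+\mathbb K D_{n-1})\cap L$) consists of cosets $rD_{n-1}+FI$ with arbitrary $r\in R$, and the whole point is to show every such $r$ is a polynomial in $b$ over $F$. The paper's mechanism is to show first that $FI_1/FI$ is abelian, then that the nilpotent operator $\ad D_n$ on $FI_1/FI$ has kernel exactly $FD_{n-1}+FI$ (one-dimensional over $F$, again by the constant-field computation), so that $\ad D_n$ has a single Jordan chain; the chain elements are then normalized inductively, e.g.\ if $cD_{n-1}+FI$ is the third element then $D_n(c)=b$ and $\frac{b^2}{2!}-c$ is annihilated by all of $D_1,\dots,D_n$, hence lies in $F$, so the third element may be replaced by $\frac{b^2}{2!}D_{n-1}+FI$. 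This single-Jordan-block argument, together with the extraction of $b$ as the coefficient of the second chain element (which automatically yields $D_n(b)=1$, $D_{n-1}(b)=0$, $D_i(b)=0$ from commutator relations, with no need for the preslice machinery of Lemma~\ref{elements}), is the missing core of the proof.
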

\begin{proof}
Let us choose a basis $D_1,\dots, D_{n-2}$ of the center
$Z$ over the field $R$ and any central ideal  $FD_{n-1} + FI$ of the quotient algebra
$FL/FI.$ Denote the intersection
$R(I + \mathbb K D_{n-1}) \cap L$ by $I_1$.
Then it is easy to see that $FI_1$ is an ideal of the Lie algebra $FL$ of rank $n-1$ over $R$ and the Lie algebra $FL/FI_1$ is of dimension $1$  over $F$ (by Lemma 5 from \cite{MP1}).
Let us choose an arbitrary element $D_n \in FL\setminus FI_1.$ Then
$D_1,\dots,D_n$ is a basis of the Lie algebra $FL$ over the field $R$.

\underline{Case 1.} The quotient algebra $FL / FI$ is abelian.
Let us show that
$FL/FI = F\langle D_{n-1} + FI,\, D_n + FI\rangle .$
Indeed, take any elements $S_1 + FI,\; S_2 + FI$ of
$FL/FI$ and write
$$
S_1 = \sum_{i=1}^n r_i D_i, \quad
S_2 = \sum_{i=1}^n s_i D_i, \quad
r_i, \; s_i \in R, \; i,j = 1,\dots,n.
$$
From the equalities
$[D_i, S_1] = [D_i, S_2] = 0$ $i=1,\dots,n-2$
(recall that $D_i \in Z(L),\; i=1,\dots,n-2$) it follows that
\begin{equation}\label{formula1}
D_i(r_j) = D_i(s_j) = 0,\;i=1\dots,n-2,\; j=1,\dots,n.
\end{equation}
Since $[FL, FI]\subseteq FI$  we have
$[D_i, S_1], [D_i, S_2] \in FI$ for $i=n-1,\, n.$
Taking into account the  equalities (\ref{formula1}) we derive that
$$D_i(s_j) = D_i(r_j) = 0,\; i=n-1,\,n,\;j=n-1,\,n.$$
Therefore it holds $s_i, r_i \in F$ for $i=n-1,\,n$ and
the elements   $D_{n-1} + FI,\, D_n + FI$ form
a basis for the abelian Lie algebra $FL/FI$ over the field $F.$

\underline{Case 2.} $FL / FI$ is nonabelian.  Then
$\dim_F FL/FI \ge 3$ because the Lie algebra $FL/FI$ is nilpotent. Let us show that the ideal
$FI_1/FI$ of the Lie algebra $FL/FI$ is abelian
(recall that $I_1 = R(I + \mathbb K D_{n-1}) \cap L$).
Since $D_{n-1} + FI$ lies in the center of the quotient algebra $FL/FI$ we have   for any element $r D_{n-1} + FI$ of
the ideal $FI_1 / FI$ the following equality:
$
[D_{n-1} + FI, rD_{n-1} + FI]=FI.
$
Hence $D_{n-1}(r)D_{n-1} + FI = FI$.
The last equality implies $D_{n-1}(r) = 0$.
But then  for any elements  $rD_{n-1} + FI,\,sD_{n-1} + FI$
of $FI_1 / FI$ we get
$$
[rD_{n-1} + FI,sD_{n-1} + FI] = [rD_{n-1},sD_{n-1} + FI =
(D_{n-1}(s)r - sD_{n-1}(r))D_{n-1}+FI=FI.
$$
The latter means that $FI_1/FI$ is an abelian ideal of $FL/FI$.

Further, the nilpotent linear operator $\ad D_n$ acts
on the linear space $FI_1 / FI$ with
$\Ker(\ad D_n) = F D_{n-1} + FI$. Indeed, let
$\ad D_n(rD_{n-1} + FI)=FI$. Then
$[D_n, rD_{n-1}] \in FI$ and therefore $D_n(r)D_{n-1} \in FI$.
   This relation implies $D_n(r) = 0$ and taking into account the equalities $D_i(r) = 0,\; i=1,\dots,n-1$
we get that $r \in F$ and
$\Ker(\ad D_n) = F D_{n-1} + FI$.
It follows from this relation  that the linear operator  $\ad D_n$ on $FI / FI_1$  has only one Jordan chain and the Jordan basis can be chosen with  the  first element $D_{n-1} + FI.$
Since $\dim FI_1 / FI \ge 2$ (recall that $\dim_F FL / FI \ge 3$)
the chain is of length $\geq 2.$
Let us take the second element of the Jordan chain in the form
$b D_{n-1} + FI,\; b \in R$. Then
$\ad D_n (bD_{n-1} + FI) = D_{n-1} + FI$ and hence
 $D_n(b) = 1$. The inclusion $[D_{n-1}, bD_{n-1}]\in FI$ implies the equality $D_{n-1}(b) = 0$, and analogously one can obtain $D_i(b) = 0,\; i=1,\dots,n-2$.

 If $\dim FI_1 / FI \ge 3$
and $cD_{n-1} + FI$ is the third element of the Jordan chain of $\ad D_n$, then repeating the above considerations we get $D_n(c) = b$.
Then  the element $\alpha = \frac{b^2}{2!} - c \in R$ satisfies the relations
$D_{n-1}(\alpha) = D_n(\alpha) = 0$ and
$D_i(\alpha) = 0,\; i=1,\dots,n-2$ since
$D_i(b) = D_i(c) = 0$. Therefore,
$\alpha = \frac{b^2}{2!} - c \in F$ and
$c = \frac{b^2}{2!} + \alpha$.
Since $\alpha D_{n-1}+FI \in \Ker(\ad D_n)$ we can take
the third element of the Jordan chain in the form $\frac{b^2}{2!}D_{n-1} + FI.$ Repeating the consideration  one can build the needed  basis of the Lie algebra $FL/FI.$
\end{proof}

\begin{lemma}\label{twovariables}
	Let $L$ be a nilpotent subalgebra of
	$W(A)$ with the center  $Z = Z(L)$ of $\rk _{R}Z=n-2,$
	$F$ the field of constants for $L$ in $R$
	and $I = RZ \cap L$.
	If $S,T $ are  elements of $L$ such that
	$[S,T] \in I$, the rank of the  subalgebra
	$L_1$ spanned by $I, S, T$ equals $n$ and
	$C_{FL}(FI) = FI$, then there exist elements
	$a,b \in R$ such that
	$S(a) = 1,\, T(a) = 0,\, S(b) = 0,\, T(b) = 1$ and
	$D(a) = D(b) = 0$ for each $D \in I$. Besides, every element $D\in FI$ can be written in the form $D=f_1(a, b)D_1+\cdots +f_{n-2}(a, b)D_{n-2}$ with some polynomials $f_{i}(u, v)\in F[u, v].$
\end{lemma}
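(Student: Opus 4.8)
The plan is to reduce the statement to a clean problem about two commuting locally nilpotent derivations on a finitely generated domain, and then to invoke the slice machinery (Principle 11 of \cite{Fr}) twice. By Lemma \ref{central}, $I=RZ\cap L$ is an abelian ideal of rank $n-2$, so $FI$ is a finite dimensional (by \cite{MP1}) abelian ideal of $FL$; I would fix an $R$-basis $D_1,\dots,D_{n-2}$ of $FI$ with all $D_j\in Z$. Two elementary facts come first. Since $[S,T]\in I\subseteq FI$ and $FI$ is abelian, the operators $\ad S$ and $\ad T$ restricted to $FI$ are nilpotent (as $FL$ is nilpotent) and commute, because $[\ad S,\ad T]=\ad[S,T]$ vanishes on the abelian ideal $FI$. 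Moreover neither is zero: if $[S,FI]=0$ then $S\in C_{FL}(FI)=FI\subseteq RZ$, whence $\rk_R L_1\le n-1$, a contradiction; similarly for $T$.

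Next I would manufacture the coordinate ring. Writing an $F$-basis $T_1,\dots,T_s$ of $FI$ as $T_i=\sum_j r_{ij}D_j$ and setting $B=F[r_{ij}]$, the identity $[D,T_i]=\sum_j D(r_{ij})D_j$ for $D\in I$ (using $[D,D_j]=0$) together with $[D,T_i]=0$ forces $D(r_{ij})=0$, so $B$ lies in the field $K:=\{r\in R\mid D(r)=0\ \forall D\in I\}$ of constants of $I$. The relations $[S,T_i],[T,T_i]\in FI$ show that $S,T$ map $B$ into itself and, by nilpotency of $\ad S,\ad T$, are locally nilpotent on $B$ exactly as in Lemma \ref{elements}; finally $[S,T]\in I$ kills $K\supseteq B$, so $S$ and $T$ commute on $B$. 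Thus I am left with two commuting, nonzero locally nilpotent derivations $S,T$ on the finitely generated $F$-domain $B\subseteq K$, and it remains to build the dual coordinates.

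For that I would apply Principle 11 of \cite{Fr} to $S$: after inverting a suitable $c\in\Ker S\cap B$ one gets $a$ with $S(a)=1$ and $B[c^{-1}]=B_0[a]$, where $B_0=\Ker S\cap B[c^{-1}]$, and $a\in K$. The crux is to produce $b$ with $S(b)=0$, $T(b)=1$, i.e.\ to rule out that $T$ becomes proportional to $S$ on $B$. This is the main obstacle, and it is precisely where the hypotheses enter. Suppose $T|_{B_0}=0$; then $T=\phi S$ on $B[c^{-1}]$ with $\phi=T(a)\in B_0$. Comparing $ST=TS$ on $B$ gives $S(\phi)S(x)=0$ for all $x\in B$, so $S(\phi)=0$ and hence $T(\phi)=\phi S(\phi)=0$; as $\phi\in K$, it is annihilated by $I,S,T$, i.e.\ by a spanning set of $RL=RL_1$, whence $\phi\in F$. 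But then $T-\phi S\in FL$ centralizes $FI$ (it kills the coefficients $r_{ij}$ and commutes with each central $D_j$), so $T-\phi S\in C_{FL}(FI)=FI\subseteq RZ$; thus $T\in RS+RZ$ and $\rk_R L_1\le n-1$, a contradiction. Hence $T|_{B_0}\ne 0$, and applying Principle 11 to the LND $T|_{B_0}$ yields $b$ with $T(b)=1$, $S(b)=0$, $b\in K$.

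It remains to adjust $a$ and to read off the coefficients. Since $S(T(a))=T(S(a))=0$, the element $T(a)$ lies in $B_0$, and because $T$ is locally nilpotent with slice $b$ on $B_0$ it is surjective there; solving $T(\psi)=T(a)$ with $\psi\in\Ker S$ and replacing $a$ by $a-\psi$ gives $S(a)=1$, $T(a)=0$ while keeping $a\in K$. Finally, iterating the slice theorem for the commuting pair $S,T$ identifies the relevant localization of $B$ with a polynomial ring $F[a,b]$ over the joint kernel $\Ker S\cap\Ker T\cap B$, which equals $F$ by the spanning argument above; hence each $r_{ij}\in F[a,b]$, and writing $\overline D=\sum_i\alpha_iT_i=\sum_j\big(\sum_i\alpha_i r_{ij}\big)D_j$ exhibits every $\overline D\in FI$ in the required form $\overline D=\sum_j f_j(a,b)D_j$ with $f_j\in F[u,v]$. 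I expect the non-proportionality step to be the only genuinely delicate point; the rest is bookkeeping with localizations.
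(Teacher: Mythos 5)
Your proof follows essentially the same route as the paper's: form the finitely generated ring $B=F[r_{ij}]$ from the coefficients of an $F$-basis of $FI$, observe that $S,T$ restrict to commuting locally nilpotent derivations of $B\subseteq \bigcap_{D\in I}\Ker D$, and apply the slice construction (Principle 11 of \cite{Fr}) twice to produce $a$ and $b$, identifying the relevant localization with $F[a,b]$. Your explicit argument that $T$ does not vanish on $\Ker S$ (ruling out $T=\phi S$ with $\phi\in F$ via $C_{FL}(FI)=FI$ and the rank hypothesis on $L_1$) and your correction of $a$ to achieve $T(a)=0$ make precise the steps the paper compresses into ``repeating these considerations,'' so the proposal is correct.
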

\begin{proof}
	Let us choose a basis $D_1,\dots,D_{n-2}$ of
	$Z$ over $R$. By the lemma conditions,
	$D_1,\dots,D_{n-2},S,T$ is a basis of $L$ over $R$.
    The ideal $FI$ of the Lie algebra $FL$ is abelian by Lemma \ref{central}
    and  $\ad S, \ad T$ are commuting  linear operators on the vector space $FI$  (over $F$). Take a basis $T_1, \ldots , T_s$ of $FI$ over $F$ (recall that $\dim _{F}FL<\infty$ by Theorem 1 from \cite{MP1}) and write $T_i=\sum _{j=1}^{n-2}r_{ij}D_j$ for some $r_{ij}\in R, i=1, \ldots ,s, j=1, \ldots ,n-2.$ Denote by
    $$B=F[ r_{ij}, i=1, \ldots ,s, j=1, \ldots ,n-2],$$
     the subring of $R$ generated by $F$ and all the coefficients  $r_{ij}.$ Then $B$ is invariant under the derivations $S$ and $T,$  these derivations are locally nilpotent on $B$ and linearly independent over $R$ (by the condition $C_{FL}(FI)=FI$ of the lemma). By Lemma \ref{elements}, there exists an element $a\in B[c^{-1}]$ such that $$S(a)=1, D_{i}(a)=0, i=1, \ldots , n-2$$
      (here $c=S(p)$ for a preslice $p$ for $S$ in $B$). Since $c\in \Ker S$ and $[S, T]=0$ one can assume without loss of generality that  $T(c)\in \Ker T.$ But then $T$ is a locally nilpotent derivation on the subring $B[c^{-1}].$ Repeating these considerations we can find an element $b\in B[c^{-1}][d^{-1}]$  with $T(b)=1$ (here $d$ is a preslice for the derivation $T$ in $B[c^{-1}]).$ Denote $B_1=B[c^{-1}, d^{-1}],$ the subring of $R$ generated by $B, c^{-1}, d^{-1}.$  Then using standard facts about locally nilpotent derivations (see, for example Principle 11 in \cite{Fr}) one can show that $B_1=B_{0}[a, b],$ where $B_0=\Ker S\cap \Ker T.$ Therefore every element $h$ of $B_1$ can be written in the form $h=f(a, b)$ with $f(u, v)\in F[u, v]$ (note that
     $$F=\Ker T\cap \Ker S\cap_{i=1}^{n-2}\Ker D_{i}.$$
      It follows from this representation of elements of $B_1$ that every element of the ideal $FI$ can be written in the form
    $$D=f_1(a, b)D_1+\cdots +f_{n-2}(a, b)D_{n-2}$$
     with some polynomials $f_{i}(u, v)\in F[u, v].$

\end{proof}

\section{The main results}

\begin{theorem} \label{classification}
	Let $L$ be a nilpotent subalgebra of rank $n\geq 3$ over $R$ from the Lie algebra $W(A)$, $Z = Z(L)$ the center of L with $\rk _{R}Z\geq n-2,$
	$F$ the field of constants of $L$ in $R$.
	Then one of the following statements holds:

1) $\dim_F FL = n$ and $FL$ is either abelian or is a direct sum of a nonabelian nilpotent Lie algebra of dimension $3$ and an abelian Lie algebra;

2) $\dim _{F}FL\geq n+1$ and $FL$  lies in one of the locally
	nilpotent subalgebras $L_1, L_2$ of $W(A)$ of rank $n$ over $R$,
	which have a basis $D_1, \dots, D_n$ over $R$ satisfying the relations $[D_i, D_j] = 0,\; i,j=1,\dots,n$ and are one  of the form:

	$
	L_1 = F\langle
	\{ \frac{b^i}{i!} D_1 \}_{i=0}^{\infty}, \dots ,
	\{ \frac{b^i}{i!} D_{n-1} \} _{i=0}^{\infty},
	D_n
	\rangle
	$

	for some $b \in R$ such that $D_i(b) = 0,\; i=1,\dots, n-1$ and
	$D_n(b) = 1$,

	$
	L_2 = F\langle
	\{ \frac{a^i b^j}{i!j!} D_1 \}_{i,j=0}^{\infty}, \dots ,
	\{ \frac{a^i b^j}{i!j!} D_{n-2} \}_{i,j=0}^{\infty},
	\{ \frac{b^i}{i!} D_{n-1} \}_{i=0}^{\infty},
	D_n
	\rangle
	$

	for some $a, b \in R$ such that
	$D_{n-1}(a) = 1,\; D_n(a) = 0,\; D_{n-1}(b) = 0, D_n(b) = 1$,
	$D_i(a) = D_i(b) = 0, \; i=1,\dots,n-2$.
\end{theorem}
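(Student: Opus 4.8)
The plan is to measure everything against the abelian ideal $I=RZ\cap L$ provided by Lemma~\ref{central} and the quotient $FL/FI$, splitting the argument according to $k:=\rk_R Z\in\{n-2,n-1,n\}$. Two preliminary remarks organize the whole proof. First, $FZ$ is the center of $FL$ and $\dim_F FZ=k$: if $D_1,\dots,D_k$ is an $R$-basis of $Z$ and $\sum r_iD_i$ is central, then $[D',\sum r_iD_i]=\sum D'(r_i)D_i=0$ for every $D'\in L$ forces $r_i\in F$, so $FZ=F\langle D_1,\dots,D_k\rangle$. Hence $FZ\subseteq FI$, and since $\dim_F FL\ge\rk_R FL=n$ always (an $R$-basis is $F$-independent) while $\dim_F FL<\infty$ by \cite{MP1}, the key dichotomy is whether $FL$ exhibits ``polynomial growth'', i.e.\ whether $\dim_F FL=n$ or $\dim_F FL\ge n+1$. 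In the first case $FL=F\langle D_1,\dots,D_n\rangle$ for an $R$-basis realizing statement~1; in the second case the growth is produced by one or two commuting locally nilpotent operators, leading to $L_1$ or $L_2$.

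If $\dim_F FL=n$, I would prove statement~1. Here a chosen $R$-basis $D_1,\dots,D_n$ is automatically an $F$-basis, so all structure constants lie in $F$; taking $D_1,\dots,D_k\in Z$ central, every bracket $[D_i,D_j]$ with $i\le k$ vanishes. If $k=n$ then $FL=FI$ is abelian. The value $k=n-1$ cannot occur: the corank-one principle (Lemma~5 of \cite{MP1}) gives $\dim_F FL/FI=1$, whence any transversal centralizes the central $FI$ and $FL$ becomes abelian of rank $n$, contradicting $\rk_R Z=n-1$. If $k=n-2$ then $FL/FI$ is $2$-dimensional, hence abelian, so $[D_{n-1},D_n]\in FZ$ is central; when it is zero $FL$ is abelian, and when it is nonzero a change of basis among $D_1,\dots,D_{n-2}$ makes it equal to $D_1$, exhibiting $FL$ as the direct sum of the Heisenberg algebra $F\langle D_1,D_{n-1},D_n\rangle$ and the abelian algebra $F\langle D_2,\dots,D_{n-2}\rangle$. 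This is exactly statement~1.

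If $\dim_F FL\ge n+1$, the growth must be made explicit through slices. When $k=n-1$ there is a single transversal $D_n$ with $[D_n,FI]\ne0$ (otherwise $D_n$ would centralize $FL=FI+FD_n$, forcing $D_n\in FZ\subseteq FI$, contrary to $D_n\notin FI$); Lemma~\ref{elements} yields a slice $b$ with $D_n(b)=1$, $D_i(b)=0$, and normalizing the Jordan chains of $\ad D_n$ on $FI$ as in Lemma~\ref{factor} rewrites $FI$ with the basis $\{\tfrac{b^i}{i!}D_j\}$, giving $FL\subseteq L_1$. When $k=n-2$ I would combine Lemma~\ref{factor}, describing $FL/FI$ as either $2$-dimensional abelian or a single nonabelian $b$-tower over $D_{n-1}$ with $D_n(b)=1$, with the commuting nilpotent operators $\ad D_{n-1},\ad D_n$ on $FI$. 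In the principal case $C_{FL}(FI)=FI$ and $[D_{n-1},D_n]\in FI$, Lemma~\ref{twovariables} (with $S=D_{n-1}$, $T=D_n$) supplies two independent slices $a,b$ satisfying $D_{n-1}(a)=1,D_n(a)=0,D_{n-1}(b)=0,D_n(b)=1$ and writes every element of $FI$ as $\sum f_l(a,b)D_l$ with $f_l\in F[u,v]$; expanding each $f_l$ in the $F$-basis $\{\tfrac{a^ib^j}{i!j!}\}$ of $F[a,b]$ and lifting the $b$-tower then gives $FL\subseteq L_2$. If the growth is governed by a single slice instead---signalled by $C_{FL}(FI)\supsetneq FI$---the same normalization places $FL$ inside $L_1$. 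In every subcase I would finally replace $D_n$ by $D_n-g$ with $g=\sum\big(\int f_l\,du\big)(a,b)\,D_l\in FI$ to achieve $[D_{n-1},D_n]=0$ without altering $D_n(a),D_n(b)$ (as $g(a)=g(b)=0$), so that all relations $[D_i,D_j]=0$ hold and $L_1,L_2$ are subalgebras of rank $n$ over $R$ on which every $\ad$ is locally nilpotent.

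The main obstacle is the normalization to the exact factorial bases over the field $F$. Lemma~\ref{elements} only produces coefficients over a possibly larger field $F_1$, whereas $L_1$ and $L_2$ are spanned over $F$ with the precise denominators $i!$ and $i!\,j!$. Pinning the coefficients down to $F$ requires re-running the Jordan-chain analysis of $\ad D_{n-1}$ and $\ad D_n$ on $FI$ directly, using that their kernels are exactly the $F$-span of the central $D_l$ (so a kernel element $\sum c_lD_l$ with $c_l\in F_1$ is central, forcing $c_l\in F$), and the characteristic-zero correction of Lemma~\ref{factor} (e.g.\ $\tfrac{b^2}{2!}-c\in F$) absorbing each adjustment into $F$. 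A second delicate point, again for $k=n-2$, is the decision between $L_1$ and $L_2$: it is governed by whether the growth of $FL$ needs one slice or two independent slices, equivalently by the rank of $C_{FL}(FI)$, and the degenerate case $C_{FL}(FI)\supsetneq FI$ must be isolated and shown to embed into $L_1$. Once the bases are fixed, closure under bracket and local nilpotency of $L_1,L_2$ reduce to the relations $D_i(a)=D_i(b)=0$ ($i\le n-2$), $D_{n-1}(a)=D_n(b)=1$, $D_{n-1}(b)=D_n(a)=0$, $[D_i,D_j]=0$, via the computation $[D_n,\tfrac{b^i}{i!}D_j]=\tfrac{b^{i-1}}{(i-1)!}D_j$ and its two-variable analogue.
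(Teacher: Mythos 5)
Your proposal is correct and follows essentially the same route as the paper: the abelian ideal $FI$ from Lemma~\ref{central}, the slice constructions of Lemmas~\ref{elements} and~\ref{twovariables}, the Jordan-chain description of the quotient from Lemma~\ref{factor}, and the final correction of $[D_{n-1},D_n]$ by an element of the enveloping abelian part, with the same resolution of the key difficulty (kernel elements of the $\ad$-operators being central forces coefficients into $F$, and $b-c\in F$ identifies the slices). The only differences are organizational --- you split the rank-$(n-2)$ growth case by whether $C_{FL}(FI)=FI$ rather than first by $\dim_F FI$, and you spell out the $\dim_F FL=n$ case in more detail --- and your conclusion that the degenerate subcase $C_{FL}(FI)\supsetneq FI$ lands in an algebra of type $L_1$ matches what the paper's construction actually produces there.
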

\begin{proof}
By Lemma \ref{central}, $I = RZ \cap L$ is an abelian ideal of $L$ and therefore $FI$ is an abelian ideal of the Lie algebra $FL$ (here the Lie algebra $FL$ is considered over the field $F$). Let $\dim _{F}FL=n.$ It is obvious that  $\dim _{F}M=\rk _{R}M$ for any subalgebra $M$ of the Lie algebra $FL,$ in particular $\dim _{F}FZ\geq n-2$ because of conditions of the theorem. We may restrict ourselves only on nonabelian algebras and assume  $\dim _{F}FZ=n-2$ (in case $\dim _{F}FZ\geq n-1$ the Lie algebra $FL$ is abelian).  Since $FL$ is nilpotent of nilpotency class $2$ one can easily show that $FL$ is a direct sum of a nonabelian Lie algebra of dimension $3$ and an abelian algebra and satisfies the condition 1) of the theorem. So we may assume further that $\dim _{F}FL\geq n+1.$

\underline{Case 1.} $\rk _{R}Z=n-1.$ Then $FI$ is of codimension $1$ in $FL$ by Lemma 5 from \cite{MP1}. Therefore $\dim _{F}FI\geq n$ because of  $\dim _{F}FL\geq n+1$ and $\dim _{F}FL/FI=1.$ We obtain the strong inclusion $FZ\varsubsetneqq FI$ because of $\dim _{F}FZ=n-1.$
 Take a basis $D_1, \ldots , D_{n-1}$ of $Z$ over $R$ and an element $D_{n}\in FL\setminus FI.$ Then $D_1, \ldots , D_{n}$ is a basis for $FL$ over $R$ and $[D_n, FI]\not = 0.$ Using Lemma \ref{elements} one can easily show that $FL$ is contained in a subalgebra of type $L_1$  from $W(A).$

\underline{Case 2.} $\rk _{R}Z=n-2$ and $\dim_F FI = n-2$.
Then $FI=FZ,$  $\dim _{F}FL/FI\geq 3$ and therefore by Lemma  \ref{factor} the quotient algebra $FL/FI$ is  of the form
\begin{equation}\label{mult}
FL/FI = F\langle \{ \frac{b^i}{i!}D_{n-1} + FI \}_{i=0}^k,
D_n + FI \rangle
\end{equation}
for  some $k\geq 1,$ $b \in R$ such that $D_n(b) = 1,\; D_{n-1}(b) = 0$ and
$D(b) = 0$ for each $D \in FI$.

The  $F$-space
$$
J = F\langle
	\{ \frac{b^i}{i!} D_1  \}_{i=0}^{\infty}, \dots,
	 \{ \frac{b^i}{i!} D_{n-1} \}_{i=0}^{\infty}
	\rangle
$$
is an abelian subalgebra of $W(A)$ and $[FL, J] \subseteq J$.
Therefore the   sum
$$
J + F\langle
\{ \frac{b^i}{i!} D_{n-1} \}_{i=0}^{\infty},
D_n \rangle
$$
 is a subalgebra of the Lie algebra $W(A)$. If $[D_n, D_{n-1}] \ne 0,$ then taking
into account the relation $[D_n, D_{n-1}] \in FI$ one can write
$$[D_n, D_{n-1}] = \alpha _1 D_1 + \dots + \alpha _{n-2} D_{n-2}$$ for some $ \alpha _i\in F$ (recall that $FI=FZ$).
Consider the element of $W(A)$ of the form
$$\widetilde{D}_{n-1} =
D_{n-1} - \alpha _1 bD_1 - \dots - \alpha _{n-2} bD_{n-2}.$$
Since   $[D_n, \widetilde{D}_{n-1}] = 0,\; \widetilde{D}_{n-1}(b) = 0$
one can replace the element  $D_{n-1}$ with the element $\widetilde{D}_{n-1}$ and assume without loss of generality that $[D_n , D_{n-1}]=0.$  As a result we get the Lie algebra of the type $L_1$ from
 the statement of the theorem.

\underline{Case 3.} $\rk _{R}Z=n-2$ and $\dim_F FI>n-2.$
First, suppose $C_{FL}(FI) = FI$. Then by Lemma \ref{twovariables} there are a basis
$D_1,\dots,D_{n-2}$ of the ideal $FI$ over $R$ and  elements $a,b \in R$ such
that $$D_{n-1}(a) = 1,\; D_n(a) = 0,\; D_{n-1}(b) = 0,\; D_n(b) = 1$$ and
$$D_i(a) = D_i(b) = 0,\; i=1,\dots, n-2$$
 and each element
$D \in FI$ can be written in the form
$$
D = f_1(a,b) D_1 + \dots + f_{n-2}(a,b) D_{n-2}
$$
for some polynomials $f_i(u,v) \in F[u,v]$.

Consider the $F$-subspace
$$J = F[a,b] D_1 + \dots + F[a,b] D_{n-2}$$
of the Lie algebra $W(A).$
It is easy to see that $J$ is an abelian subalgebra of $W(A)$ and
$[FL, J] \subseteq J$. If $[D_n, D_{n-1}] = 0$, then it is obvious that
the subalgebra $FL + J$ is of type $L_2$ of the theorem and $FL\subset L_1.$
Let $[D_n, D_{n-1}] \ne 0$. Since $[D_n, D_{n-1}] \in FI$, it follows
$$
[D_n, D_{n-1}] = h_1(a,b) D_1 + \dots + h_{n-2} D_{n-2}
$$
for some polynomials $h_i(u,v) \in F[u,v]$. Then the subalgebra $J$
has such an element
$$T = u_1(a,b) D_1 + \dots u_{n-2}(a,b) D_{n-2}$$
that $D_n(u_i(a,b)) = h_i(a,b), i=1, \ldots , n-2$ (recall that $D_n(a) = 0,\; D_n(b) = 1$),
and hence the element $\widetilde{D}_{n-1}=D_{n-1} - T$ satisfies the equality
$[D_n, T] = 0$. Replacing $D_{n-1}$ with $\widetilde{D}_{n-1}$ we get
the needed  basis of the Lie algebra $FL + J$ and see that $FL$ can be embedded into the Lie $L_2$ of $W(A).$
So in case of $C_{FL}(FI) = FI$ the Lie algebra $FL$
can be isomorphically embedded into the Lie algebra of type $L_2$ from the statement of the theorem.

Further, suppose $C_{FL}(FI) \ne FI$. Since $C_{FL}(FI) \supseteq FI$ one can easily show that $D_{n-1} \in C_{FL}(FI) \setminus FI$
(note that $FL / FI$ has the unique minimal ideal $FD_{n-1} + FI$).
Then $[D_{n-1}, FI] = 0$, and therefore $[D_n, FI]\not =0.$ Therefore by Lemma \ref{elements} there is an element $c \in R$ such  that
$$D_n(c) = 1,\; D_{n-1}(c) = 0,\; D_i(c) = 0, \  i=1,\dots,n-2.$$
Moreover, each element of $FI$ is of the form
$g_1(c) D_1 + \dots + g_{n-2}(c) D_{n-2}$ for some polynomials
$g_i(u) \in F[u]$. By Lemma \ref{factor}, the quotient algebra $FL/FI$ is of the form
$$
FL/FI=F\langle \{ \frac{b^i}{i!}D_{n-1} + FI \}_{i=0}^k,
D_n + FI \rangle
$$
for some $b \in R, k\geq 1$ such that $D_n(b) = 1,\; D_{n-1}(b) = 0$.
But then
 $$D_{n-1}(b-c) = 0,\; D_n(b-c) = 0,\; D_i(b-c) = 0,$$ and hence
$b-c = \alpha$ for some $\alpha \in F$. Without loss of generality
we can assume $b = c$.
The locally nilpotent subalgebra

$$
L_1 =
	F\langle
	\{ \frac{a^i b^j}{i!j!} D_1\}_{i,j=0}^{\infty}, \dots
	\{ \frac{a^i b^j}{i!j!} D_{n-2}\}_{i,j=0}^{\infty},
	\{ \frac{b^i}{i!} D_{n-1} \}_{i=0}^{\infty},
	D_n
	\rangle
$$

of the Lie algebra $W(A)$ contains $FL$ and satisfies the conditions for the Lie algebra of type $L_2$ from the statement of the theorem, possibly except the condition $[D_n, D_{n-1}] = 0$.
If $[D_n, D_{n-1}] \ne 0$, then from the inclusion
$[D_n, D_{n-1}] \in FI$ it follows that
$$
[D_n, D_{n-1}] = f_1(b)D_1 + \cdots + f_{n-2}(b)D_{n-2}
$$
for some polynomials $f_i(u) \in F[u]$.

One can easily show that there is an element
$$\overline{D} = h_1(b) D_1 + \dots + h_{n-2}(b) D_{n-2} \in L_1,$$ such that $[D_n, \overline{D}] = [D_n, D_{n-1}]$ (one can take antiderivations $h_i$ for polynomials $f_i, i=1, \ldots , n-2$). Replacing $D_{n-1}$ with
$D_{n-1} - \overline{D}$ we get the needed basis over $R$ of the Lie algebra $L_2$.
\end{proof}

\begin{remark}
Any  Lie algebra of dimension $n$   over $F$ can be realized as a Lie algebra of rank $n$ over $R$ by Theorem 2 from \cite{Mak1}. So the  Lie  algebra of type 1) from Theorem 1 can be chosen in any way possible.
\end{remark}

As a corollary we get the next statement about embedding of Lie algebras of derivations.

\begin{theorem}\label{embed}
	Let $L$ be a nilpotent subalgebra of rank $n$ over $R$  of the Lie algebra
	$W(A)$, $Z = Z(L)$ the center of $L$   and
	$F$ the field of constants of $L$ in $R.$ If  $\rk _{R}Z\geq n-2,$ 	then the Lie algebra $FL$ can be isomorphically embedded (as an abstract Lie algebra) into the triangular Lie algebra $u_n(F).$
\end{theorem}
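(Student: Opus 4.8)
The plan is to derive this directly from Theorem \ref{classification}, which already confines $FL$ to one of a short list of explicitly presented algebras; it then remains only to embed each such model into $u_n(F)$. The guiding principle is that under the basic derivations the slices $a,b\in R$ behave exactly as the coordinate functions behave under partial differentiation: $D_n(b)=1$, $D_{n-1}(a)=1$, and all other actions of $D_1,\dots,D_n$ on $a,b$ vanish, just as $\partial/\partial x_n$, $\partial/\partial x_{n-1}$ act on $x_n,x_{n-1}$. So sending slices to variables and the commuting derivations $D_i$ to partial derivatives ought to transport the bracket relations verbatim.

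Concretely, when $FL\subseteq L_1$ I would define an $F$-linear map $\phi\colon L_1\to u_n(F)$ on the natural basis by
$$\phi\Bigl(\tfrac{b^i}{i!}D_j\Bigr)=\tfrac{x_n^i}{i!}\,\frac{\partial}{\partial x_j}\quad(1\le j\le n-1,\ i\ge 0),\qquad \phi(D_n)=\frac{\partial}{\partial x_n},$$
and when $FL\subseteq L_2$ I would use two variables, sending $\tfrac{a^ib^j}{i!j!}D_k$ to $\tfrac{x_{n-1}^i x_n^j}{i!j!}\,\partial/\partial x_k$ for $k\le n-2$, sending $\tfrac{b^i}{i!}D_{n-1}$ to $\tfrac{x_n^i}{i!}\,\partial/\partial x_{n-1}$, and $D_n$ to $\partial/\partial x_n$. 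The first thing to verify is that the images actually lie in $u_n(F)$: the coefficient of $\partial/\partial x_j$ involves only $x_n$ (respectively only $x_{n-1},x_n$) while $j\le n-1$ (respectively $k\le n-2$), so the triangularity condition $f_i\in F[x_{i+1},\dots,x_n]$ is satisfied, and the coefficient of $\partial/\partial x_n$ is constant.

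Next I would check that $\phi$ is a homomorphism by comparing brackets of basis elements, using relation 1) for derivations together with $[D_i,D_j]=0$ and the known values $D_n(b)=1$, $D_{n-1}(a)=1$, $D_i(a)=D_i(b)=0$ otherwise. For instance $[\tfrac{b^i}{i!}D_j,\tfrac{b^m}{m!}D_l]=0$ for $j,l\le n-1$ matches $[\tfrac{x_n^i}{i!}\partial/\partial x_j,\tfrac{x_n^m}{m!}\partial/\partial x_l]=0$, while $[D_n,\tfrac{b^i}{i!}D_j]=\tfrac{b^{i-1}}{(i-1)!}D_j$ matches $[\partial/\partial x_n,\tfrac{x_n^i}{i!}\partial/\partial x_j]=\tfrac{x_n^{i-1}}{(i-1)!}\partial/\partial x_j$; the $L_2$ relations (now involving both $a,b$ and $\partial/\partial x_{n-1},\partial/\partial x_n$) transfer by the identical computation. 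Injectivity is immediate since $\phi$ sends the chosen $F$-basis of $L_1$ (respectively $L_2$) to an $F$-linearly independent family, and restricting $\phi$ to $FL$ yields the desired embedding.

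Finally, the finite-dimensional case 1) of Theorem \ref{classification} must be handled on its own. An abelian $FL$ of dimension $n$ embeds via the commuting family $\partial/\partial x_1,\dots,\partial/\partial x_n\in u_n(F)$; a direct sum of a three-dimensional Heisenberg algebra with an abelian algebra embeds by realizing the Heisenberg summand as $\langle x_n\,\partial/\partial x_1,\ \partial/\partial x_n,\ \partial/\partial x_1\rangle$, where $[x_n\,\partial/\partial x_1,\partial/\partial x_n]=-\partial/\partial x_1$ with $\partial/\partial x_1$ central, and the complementary abelian summand as $n-3$ of the remaining $\partial/\partial x_k$ ($2\le k\le n-1$), which commute with all the previous generators. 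I expect the only delicate points to be bookkeeping: confirming that every proposed image genuinely satisfies the dependency condition defining $u_n(F)$ and that the sign patterns in the commutator computations line up; the conceptual content is already supplied by Theorem \ref{classification}.
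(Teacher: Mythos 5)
Your proposal is correct and follows essentially the same route as the paper: reduce to Theorem \ref{classification}, embed $L_1$ and $L_2$ via the basis maps $D_i\mapsto\partial/\partial x_i$, $\frac{b^i}{i!}D_j\mapsto\frac{x_n^i}{i!}\partial/\partial x_j$ (and the two-variable analogue for $L_2$), and treat the $n$-dimensional case by exhibiting explicit abelian and Heisenberg-type subalgebras of $u_n(F)$. The only difference is your choice of realization of the nonabelian $3$-dimensional summand ($\langle x_n\partial/\partial x_1,\partial/\partial x_n,\partial/\partial x_1\rangle$ versus the paper's $\langle\partial/\partial x_1,\partial/\partial x_2+x_3\partial/\partial x_1,\partial/\partial x_3\rangle$), which is immaterial.
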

\begin{proof}
First, suppose $\dim _{F}FL = n$. If $FL$ is abelian, then $FL$ is isomorphically embeddable into the Lie algebra $u_n(F)$ because the subalgebra $F\langle \frac{\partial}{\partial x_1}, \ldots , \frac{\partial}{\partial x_n}\rangle$ of $u_n(F)$ is abelian of of dimension  $n$ over $F.$ So one can assume that $FL$ is nonabelian. Then by Theorem \ref{classification} $FL=M_1\oplus M_2,$ where $M_1$ is an abelian Lie algebra of dimension $n-3$ over $F$ and $M_2$ is nilpotent nonabelian with $\dim _{F}M_2=3.$  The subalgebra $H_2=F\langle \frac{\partial}{\partial x_1},
			    \frac{\partial}{\partial x_2} +
			    x_3 \frac{\partial}{\partial x_1},
			    \frac{\partial}{\partial x_3} \rangle$
of the Lie algebra $u_n(F)$ is obviously isomorphic to $M_2.$ The abelian subalgebra $H_1=F\langle \frac{\partial}{\partial x_4}, \dots ,
		\frac{\partial}{\partial x_n} \rangle , n\geq 4$  is isomorphic to the Lie algebra $M_1.$ So $FL\simeq H_1\oplus H_2$ is isomorphic to a subalgebra of $u_n(F).$  Note that $H_1\oplus H_2$ is of rank $n$ over the field $\mathbb K(x_1, \ldots , x_n)$ of rational functions in $n$ variables.

Next, let  $\dim _{F}FL>n.$  By Theorem \ref{classification},
the Lie algebra $FL$ lies in one of the subalgebras of types $L_1$ or $L_2.$
Therefore it is sufficient to show that the subalgebras $L_1, L_2$ of $W(A)$ from Theorem  \ref{classification}  can be isomorphically embedded into the Lie algebra $u_n(F)$.
In case  $L_1,$ we define a mapping $\varphi$  on the basis $D_1,\dots,D_n, \{\frac{b^i}{i!} D_i\}_{i=1}^{\infty}$  of $L_1$ over $R$
by the rule $\varphi(D_i) = \frac{\partial}{\partial x_i}, i=1, \ldots ,n ,$
$\varphi(\frac{b^i}{i!} D_i) =
\frac{x_n^i}{i!}\frac{\partial}{\partial x_i}, i=1, \ldots , n-1$ and then extend it on $L_1$ by linearity.
One can  easy to see that the mapping $\varphi$ is an isomorphic embedding
of the Lie algebra $L_1$ into $u_n(F)$. Analogously, on $L_2$ we define a mapping $\psi: L_2 \to u_n(F)$ by the rule
$$
\psi(D_i) = \frac{\partial}{\partial x_i},\; i=1,\ldots, n, \
\psi(\frac{a^ib^j}{i!j!}D_k) = \frac{x_{n-1}^i x_n^j}{i!j!}\frac{\partial}{\partial x_k}, k=1,\dots,n-2
$$
$$
 \psi(\frac{b^i}{i!}D_{n-1}) = \frac{x_n^i}{i!}
\frac{\partial}{\partial x_{n-1}},  \ i\geq 1, j\geq 1,  \ \ \
$$
and further by linearity.
Then $\psi$ is an isomorphic embedding of the Lie algebra $L_2$ into the Lie algebra $u_n(F)$.
\end{proof}


%
\end{document}